\theoremstyle{plain}
\newtheorem{theorem}{Theorem}[section]
\newtheorem{proposition}[theorem]{Proposition}
\newtheorem{lemma}[theorem]{Lemma}
\theoremstyle{definition}
\newtheorem{definition}[theorem]{Definition}
\newcommand{\norm}[1]{\left\lVert#1\right\rVert}
\title{A p-adic Poissonian Pair Correlation Concept}
\author{Christian Wei\ss{}}
\date{\today}
\begin{document}

\maketitle

\begin{abstract} The pair correlation statistic is an important concept in real uniform distribution theory. Therefore, sequences in the unit interval with (weak) Poissonian pair correlations have attracted a lot of attention in recent time. The aim of this paper is to suggest a generalization to the p-adic integers and to prove some of its main properties. In particular, connections to the theory of p-adic discrepancy theory are discussed.
\end{abstract}

\section{Introduction}

The behavior of gaps between the first $N \in \mathbb{N}$ elements of a sequence $(x_n)_{n \in \mathbb{N}} \subset [0,1]$ on a local scale can be measured by the function
$$F_{N}(s) := \frac{1}{N} \# \left\{ 1 \leq k \neq l \leq N \ : \ \left\| x_k - x_l\right\|_{\infty} \leq \frac{s}{N} \right\},$$
where $\left\| \cdot \right\|$ is the distance of a number from its nearest integer and $s \geq 0$. The sequence $(x_n)_{n \in \mathbb{N}}$ has Poissionian pair correlations if
$$ \lim_{N \to \infty} F_{N}(s) = 2s$$
for all $s \geq 0$. Despite that a random sequence drawn independently from uniform distribution generically has Poissonian pair correlations, see \cite{Mar07}, only few explicit examples of sequences possessing this property have been found, probably the most famous in \cite{BMV15}. The concept of Poissonian pair correlations was popularized by Rudnick and Sarnak in \cite{RS98} and has since then gained a high level of attention. It has been generalized in many different regards, including e.g. to higher dimensions, see \cite{HKL19}, to weak pair correlations, see \cite{NP07, HZ21} and below, to a combination of both, see \cite{Wei22a}, to compact Riemannian manifolds, see \cite{Mar20}, and to higher order correlations, see \cite{HZ23}.\\[12pt]
In this paper, we discuss yet another type of generalization, namely by replacing $[0,1]$ by its p-adic analogue, the p-adic integers $\mathbb{Z}_p$. Although \cite{Zah04} also contained the pair correlations statistic in a p-adic setting, to the best of the author's knowledge, no systematic discussion on pair correlations of sequences in $(x_n)_{n \in \mathbb{N}} \subset \mathbb{Z}_p$ can been found in the literature yet. This paper therefore aims to fill this gap and to transfer some of the most important properties of the real pair correlation function to the p-adic integers.\\[12pt]
Let $p \in \mathbb{Z}$ be a prime number. Recall that the $p$-adic absolute value is defined as follows, see e.g. \cite{Neu99}: for $a = \frac{b}{c}$ with $b,c \in \mathbb{Z} \setminus \{ 0 \}$, let $m$ be the highest possible power with $a = p^m \frac{b'}{c'}$ and $(b'c',p)=1$. Then
$$|a|_p:= p^{-m}.$$
The p-adic numbers $\mathbb{Q}_p$ are the completion of $\mathbb{Q}$ with respect to $|\cdot|_p$. The p-adic integers
$$\mathbb{Z}_p:=\left\{ x \in \mathbb{Q}_p \, : \, \left|x\right|_p \leq 1 \right\}$$
are a subring of $\mathbb{Q}_p$ and they are the closure of $\mathbb{Z}$ in the field $\mathbb{Q}_p$. Therefore, the p-adic analogue of $[0,1] \subset \mathbb{R}$ is $\mathbb{Z}_p$. Moreover it holds that
$$\mathbb{Q}_p = \bigcup_{m \geq 0} p^{-m} \mathbb{Z}_p.$$
Thus $\mathbb{Q}_p$ is countable union of copies of $\mathbb{Z}_p$ (as $\mathbb{R}$ is a countable union of copies of $[0,1]$). Finally, the ring of units $\mathbb{Z}_p^\times$ is defined as $\mathbb{Z}_p^\times:=\left\{ y \in \mathbb{Z}_p \, : \, \left|y\right|_p = 1 \right\}$.\\[12pt]
Now, we are ready to introduce p-adic Poissonian pair correlations. We will do this in greater generality by discussing weak pair correlations. Usually, see e.g. \cite{Ste18}, a sequence $(x_n)_{n \in \mathbb{N}} \in [0,1]$ is said to have weak Poissonian pair correlations for $0 \leq \alpha \leq 1$ if
\begin{align*} 
\lim_{N \to \infty} \frac{1}{N^{2-\alpha}} \# \left\{ 1 \leq i \neq j \leq N \, : \, \norm{x_i-x_j} \leq \frac{s}{N^\alpha} \right\} = 2s
\end{align*}
for all $s \geq 0$. This notion cannot be directly transferred to the p-adic integers because only discs of the volumes $p^{-k}$ with $k \in \mathbb{Z}$ exist in the p-adic numbers $\mathbb{Q}_p$. However, it was already pointed out in \cite{NP07}, that the property of having weak Poissonian pair correlations for $\alpha$ may be rewritten as
\begin{align*} 
\lim_{N \to \infty} \frac{1}{N^2} \frac{1}{\mu\left( D(0,s/N^\alpha)\right)} \# \left\{ 1 \leq i \neq j \leq N \, : \, \norm{x_i-x_j} \leq \frac{s}{N^\alpha} \right\} = 1
\end{align*}
with $\mu(\cdot)$ denoting the Lebesgue (Haar) measure on $\mathbb{R}^+$ and $D(0,r)$ the disc of radius $r>0$ centered at $0$. This equivalent definition may now be used for $\mathbb{Z}_p$: Let $\mu$ be the Haar measure on $\mathbb{Q}_p$ (as locally compact topological space) normalized with $\mu(\mathbb{Z}_p) = 1$. Next we note that for any $z \in \mathbb{Z}_p$ and $0 \leq s \leq 1$ we have 
$$D_p(z,s) := \left\{ x \, : \, \left| x - z \right|_p \leq s \right\} \subset \mathbb{Z}_p$$ 
by definition of $\mathbb{Z}_p$ and the (strong) p-adic triangle inequality $|x+y|_p \leq \max (|x|_p,|y|_p)$. As the p-adic absolute value can only take values $p^{-k}$ we get $D_p(z,s) = D_p(z,p^{-k_0})$ for the smallest $k_0$ with $p^{-k_0} \leq s$ and hence $\mu(D_p(z,s)) = \mu(D_p(z,p^{-k_0})) = p^{-k_0}.$ We may thus define
$$F_{N,\alpha,p}(s) := \frac{1}{N^2} \frac{1}{\mu\left( D_p(0,s/N^\alpha)\right)} \# \left\{ 1 \leq i \neq j \leq N \, : \, \left|x_i-x_j\right|_p \leq \frac{s}{N^\alpha} \right\}.$$
We say that $(x_n)_{n\in \mathbb{N}} \subset \mathbb{Z}_p$ has weak Poissonian pair correlations for $0 \leq \alpha \leq 1$ if 
$$\lim_{N \to \infty} F_{N,\alpha,p}(s) = 1$$ 
for all $s \geq 0$. If $\alpha = 1$, then we also just speak of Poissonian pair correlations.\\[12pt]
As will be made precise in Theorem~\ref{thm:aPPC:disc} the definition preserves the property that a uniformly distributed sequence in $\mathbb{Z}_p$ generically has Poissonian pair correlations. Before we formally discuss the notion of uniform distribution in $\mathbb{Z}_p$ next, we already give a heuristic argument for this fact similar to the one from \cite{LS20b} for the real case: consider sequence $(x_n)_{n \in \mathbb{N}} \subset \mathbb{Z}_p$ and a fixed $N \in \mathbb{N}$. For fixed $1 \leq i \leq N$ we expect $p^{-k_0} \cdot (N-1)$ of the remaining points to fulfill $\left| x_i - x_n \right|_p \leq \frac{s}{N^\alpha}$. Since there are in total $N$ points, we expect that $F_{N,\alpha,p}(s)$ is approximately 
$$\frac{1}{N^2} \cdot \frac{1}{p^{-k_0}} \cdot N \cdot (N-1) \cdot p^{-k_0}$$
and thus converges to $1$ as $N \to \infty$.\\[12pt]
The notion of uniform distribution on $\mathbb{Z}_p$ was first introduced in pioneering work \cite{Cug62}, while a nice summary of the most important properties can be found in \cite{KN74}, Chapter 5.2 : For given $k \geq 1$, a sequence $(x_n)_{n \in \mathbb{N}} \subset \mathbb{Z}_p$ is uniformly distributed of order $k$ if for every $z \in \mathbb{Z}_p$, the limit
$$\xi_k(z) := \frac{\# D_p(\alpha,1/p^k) \cap \{ x_1,\ldots,x_N \}}{N}$$
exists and is equal to $p^{-k}$. The sequence is uniformly distributed in $\mathbb{Z}_p$ if it is uniformly distributed of order $k$ for every $k \geq 1$. As a p-adic analogue to Kronecker sequences in $[0,1]$, see again \cite{KN74}. the following result goes back to \cite{Cug62}.
\begin{theorem} \label{thm:ab:ud} Let $a,b \in \mathbb{Z}_p$. The sequence $(na+b)_{n \in \mathbb{N}}$ is uniformly distributed in $\mathbb{Z}_p$ if and only if $a \in \mathbb{Z}_p^\times$.    
\end{theorem}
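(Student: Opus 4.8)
The plan is to translate the statement into congruences modulo $p^k$ and reduce it to an elementary counting fact about arithmetic progressions in the finite cyclic group $\mathbb{Z}/p^k\mathbb{Z}$. The key dictionary is that, for $z \in \mathbb{Z}_p$ and $k \geq 1$, the disc $D_p(z,1/p^k)$ is exactly the set of $x \in \mathbb{Z}_p$ with $x \equiv z \pmod{p^k}$, i.e.\ the preimage of a single residue class under the canonical surjection $\mathbb{Z}_p \twoheadrightarrow \mathbb{Z}_p/p^k\mathbb{Z}_p \cong \mathbb{Z}/p^k\mathbb{Z}$. Since there are precisely $p^k$ such discs, being uniformly distributed of order $k$ is equivalent to the requirement that, for every residue class $r$ modulo $p^k$, the asymptotic density of $\{\, n : x_n \equiv r \pmod{p^k}\,\}$ equals $p^{-k}$. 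For $x_n = na+b$ this becomes a statement about the reduction of $a$ modulo $p^k$ alone.

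For the ``if'' direction, I would use that $a \in \mathbb{Z}_p^\times$ is equivalent to $|a|_p = 1$, hence to the reduction of $a$ being a unit in $\mathbb{Z}/p^k\mathbb{Z}$ for every $k \geq 1$. In that case the affine map $n \mapsto a n + b$ induces a bijection of $\mathbb{Z}/p^k\mathbb{Z}$, so the sequence $(na+b \bmod p^k)_{n \geq 1}$ is periodic with period $p^k$ and within each period each of the $p^k$ residue classes occurs exactly once. Writing $N = q p^k + \rho$ with $0 \leq \rho < p^k$, the counting function attached to any fixed disc of radius $1/p^k$ then equals $q + e$ with $0 \leq e \leq \rho < p^k$; dividing by $N$ and letting $N \to \infty$ yields the limit $p^{-k}$. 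As $k \geq 1$ was arbitrary, $(na+b)_{n\in\mathbb{N}}$ is uniformly distributed in $\mathbb{Z}_p$.

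For the converse I would argue by contraposition. If $a \notin \mathbb{Z}_p^\times$, then $|a|_p \leq p^{-1}$, i.e.\ $a \in p\mathbb{Z}_p$, and hence $na + b \equiv b \pmod{p}$ for every $n \in \mathbb{N}$. Thus every term of the sequence lies in the single disc $D_p(b,1/p)$, so $\xi_1(b) = 1$, whereas uniform distribution of order $1$ would require $\xi_1(b) = p^{-1} < 1$. Therefore the sequence is not uniformly distributed of order $1$, and in particular not uniformly distributed in $\mathbb{Z}_p$.

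I do not expect a genuine obstacle here: once the disc--coset dictionary of the first paragraph is set up carefully, both implications are short. The only point needing a little attention is the bookkeeping of the incomplete final period in the ``if'' direction, that is, bounding the error term $e$ uniformly in $N$ so that it is absorbed by the normalization $1/N$. (Alternatively, one could run the same argument through a Weyl-type criterion for $\mathbb{Z}_p$, evaluating the geometric character sums $\sum_{n \leq N}\chi(na+b)$, but the direct counting above is more self-contained.)
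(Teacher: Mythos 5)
The paper does not actually prove Theorem~\ref{thm:ab:ud}: it is quoted as a known result going back to Cugiani's work on uniform distribution in $\mathbb{Z}_p$, so there is no in-paper argument to compare against. Your proof is correct and complete as written. The disc--coset dictionary $D_p(z,1/p^k)=\{x\in\mathbb{Z}_p : x\equiv z \pmod{p^k}\}$ is exactly right, the ``if'' direction via periodicity of $n\mapsto an+b$ modulo $p^k$ (with the incomplete final block contributing an error bounded by $p^k$, hence killed by the $1/N$ normalization) is sound, and the contrapositive for the ``only if'' direction --- all terms falling into the single disc $D_p(b,1/p)$ when $a\in p\mathbb{Z}_p$ --- is the shortest possible argument. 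The one cosmetic remark is that your error term $e$ in the ``if'' direction is actually at most $1$, not merely at most $\rho$, since each residue class occurs at most once in a partial period; but your weaker bound already suffices. For comparison, the treatment in the literature that the paper cites (Cugiani, and Chapter~5.2 of Kuipers--Niederreiter) typically runs the equivalence through the $p$-adic Weyl criterion, i.e.\ through the vanishing of averaged character sums $\frac{1}{N}\sum_{n\le N}\chi(na+b)$ over the nontrivial characters of $\mathbb{Z}_p$, which generalizes more readily (e.g.\ to polynomial sequences or to the $g$-adic and adelic settings mentioned in the paper's outlook); your direct counting argument trades that generality for being entirely self-contained, which is a perfectly good trade here.
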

The notion of uniform distribution in $\mathbb{Z}_p$ can be reformulated as follows: a sequence $(x_n)_ {n \in \mathbb{N}} \subset \mathbb{Z}_p$ is uniformly distributed if and only if for every $z \in \mathbb{Z}_p$ and $k \in \mathbb{N}$ we have
$$\lim_{N \to \infty} \left| \frac{\# D_p(z,1/p^k) \cap \{ x_1,\ldots,x_N \} }{N} - \frac{1}{p^k} \right| = 0.$$
This observation suggests to introduce the notion of $p$-adic discrepancy which allows to quantify the degree of uniformity as in the real setting. The notion stems from \cite{Cug62}, see also \cite{Som22}.
\begin{definition} Let $(x_n)_{n \in \mathbb{N}} \subset \mathbb{Z}_p$ and $N \in \mathbb{N}$. Then the $p$-adic discrepancy is defined as
$$D_N(x_n) := \sup_{\alpha \in \mathbb{Z}_p, k \in \mathbb{N}} \left| \frac{\# D_p(\alpha,1/p^k) \cap \{ x_1,\ldots,x_N \} }{N} - \frac{1}{p^k} \right|.$$
\end{definition}
It is easy to show that $\tfrac{1}{N} \leq D_N(x_n) \leq 1$ for all sequences $(x_n)_{n \in \mathbb{N}} \subset \mathbb{Z}_p$ and $N \in \mathbb{N}$. Using the notion of p-adic discrepancy, Theorem~\ref{thm:ab:ud} could be quantified in \cite{Bee69}.
\begin{theorem} \label{thm:disc:kron:quant} Let $a \in \mathbb{Z}_p^\times, b \in \mathbb{Z}_p$ and $x_n := na +b$. Then
$$D_N(x_n) = \mathcal{O} \left( \frac{1}{N} \right).$$
\end{theorem}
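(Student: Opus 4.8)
The plan is to reduce the evaluation of $D_N(x_n)$ to an elementary count of integers in an arithmetic progression. The key observation is that a disc $D_p(\alpha,1/p^k)$ with $\alpha \in \mathbb{Z}_p$ and $k \in \mathbb{N}$ is precisely the coset $\alpha + p^k\mathbb{Z}_p$, so $x_n = na+b$ belongs to it if and only if $na \equiv \alpha - b \pmod{p^k\mathbb{Z}_p}$. Here the hypothesis $a \in \mathbb{Z}_p^\times$ is essential: since $a$ is a unit it is invertible modulo $p^k$ for every $k$, and the congruence becomes $n \equiv a^{-1}(\alpha-b) \pmod{p^k\mathbb{Z}_p}$. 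Under the canonical identification $\mathbb{Z}_p/p^k\mathbb{Z}_p \cong \mathbb{Z}/p^k\mathbb{Z}$ the right-hand side is a fixed residue class $c = c(\alpha,k) \in \{0,1,\ldots,p^k-1\}$, whence
$$\# \left( D_p(\alpha,1/p^k) \cap \{ x_1,\ldots,x_N \} \right) = \#\{ 1 \leq n \leq N \, : \, n \equiv c \pmod{p^k} \}.$$
Note that this count depends on $\alpha$ only through its residue modulo $p^k$, so for fixed $k$ the supremum defining $D_N$ ranges over only finitely many discs, and as $(\alpha,k)$ varies the class $c$ runs through all residues modulo all powers $p^k$.

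The second step is the standard elementary fact that the number of integers in $\{1,\ldots,N\}$ lying in a fixed residue class modulo $q := p^k$ is either $\lfloor N/q \rfloor$ or $\lceil N/q \rceil$, hence differs from $N/q$ by at most $1 - q^{-1}$. Dividing by $N$ and using Step 1, we obtain for every $\alpha$ and $k$ the bound
$$\left| \frac{\# \left( D_p(\alpha,1/p^k) \cap \{ x_1,\ldots,x_N \} \right)}{N} - \frac{1}{p^k} \right| \leq \frac{1 - p^{-k}}{N} < \frac{1}{N}.$$
Taking the supremum over all $\alpha \in \mathbb{Z}_p$ and $k \in \mathbb{N}$ yields $D_N(x_n) \leq \tfrac{1}{N}$, which is the asserted $\mathcal{O}(1/N)$ estimate. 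In combination with the lower bound $D_N(x_n) \geq \tfrac{1}{N}$ recorded above, this actually pins down $D_N(x_n) = \tfrac{1}{N}$ exactly, with the supremum approached along $\alpha = x_1$ as $k \to \infty$.

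There is no substantial obstacle in this argument: the only genuine idea is the linearization of the membership condition in Step 1, which is exactly the place where $a \in \mathbb{Z}_p^\times$ is used, and everything afterwards is the classical count of lattice points in an interval. The one point deserving a moment's attention is to confirm that the bound of Step 2 also covers the regime $p^k \geq N$, where every disc meets $\{x_1,\ldots,x_N\}$ in at most one point and one should check the subcases of count $0$ and count $1$ against $(1-p^{-k})/N$ directly; but these present no real difficulty.
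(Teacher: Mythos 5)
Your proof is correct, but there is nothing in the paper to compare it against: Theorem~\ref{thm:disc:kron:quant} is quoted as a known result of Beer \cite{Bee69} quantifying Theorem~\ref{thm:ab:ud}, and the paper gives no proof of it. Your self-contained argument is the natural one. The key step --- rewriting $na+b \in D_p(\alpha,p^{-k}) = \alpha + p^k\mathbb{Z}_p$ as the integer congruence $n \equiv c \pmod{p^k}$ via the invertibility of $a$ modulo $p^k$ and the identification $\mathbb{Z}_p/p^k\mathbb{Z}_p \cong \mathbb{Z}/p^k\mathbb{Z}$ --- is exactly where the hypothesis $a \in \mathbb{Z}_p^\times$ enters, and the counting bound $\left|\#\{1 \leq n \leq N : n \equiv c \pmod{p^k}\} - N/p^k\right| \leq 1 - p^{-k}$ holds in all regimes, including $p^k > N$ (a count of $0$ forces $N \leq p^k - 1$, so the bound survives there too). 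Your conclusion is in fact sharper than the stated $\mathcal{O}(1/N)$: together with the general lower bound $D_N(x_n) \geq 1/N$ recorded in the paper, it pins down $D_N(x_n) = 1/N$ exactly, which is consistent with the paper's later remark that these Kronecker-type sequences are the only known examples attaining the optimal order of p-adic discrepancy.
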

As was already mentioned, it is well-known in the real case that a sequence of identically distributed, independent random variables $(X_n)_{n \in \mathbb{N}}$ drawn from uniform distribution on $[0,1)$ has Poissonian pair correlations almost surely. The same holds true for weak Poissonian pair correlations in the p-adic setting for any $0 \leq \alpha \leq 1$.
\begin{theorem}  \label{thm:ud:aPPC} Let $(X_n)_{n\in\mathbb{N}}$ be a sequence of independent random variables, which are uniformly distributed on $\mathbb{Z}_p$. Then for any $0 \leq \alpha \leq 1$ the sequence almost surely has $\alpha$-Poissonian pair correlations.    
\end{theorem}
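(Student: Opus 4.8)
The plan is to prove a slightly stronger statement that is locally uniform in $s$: almost surely, for every compact $K\subset(0,\infty)$ one has $\sup_{s\in K}\lvert F_{N,\alpha,p}(s)-1\rvert\to 0$ as $N\to\infty$. Since $(0,\infty)=\bigcup_{C\in\mathbb{N}}[p^{-C},p^{C}]$ and a countable intersection of probability-one events has probability one, this yields the theorem for every fixed $s>0$ on a single probability-one event (the value $s=0$, where $F_{N,\alpha,p}(0)$ is a $0/0$ expression, being interpreted trivially). The key first observation is that for fixed $N$ the quantity $F_{N,\alpha,p}(s)$ depends on $s$ only through the integer $k_0=k_0(N,s):=\max\{0,\lceil\alpha\log_p N-\log_p s\rceil\}$: one has $F_{N,\alpha,p}(s)=G_{N,k_0}$, where $G_{N,k}:=\tfrac{p^{k}}{N^{2}}R_{N,k}$ and $R_{N,k}:=\#\{1\le i\ne j\le N:\ \lvert X_i-X_j\rvert_p\le p^{-k}\}$. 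As $s$ ranges over a compact $K\subset(0,\infty)$ the index $k_0(N,s)$ stays within a window of bounded length around $\alpha\log_p N$ (the length depends on $K$ but not on $N$), so it suffices to show that almost surely $\sup_{k:\ \lvert k-\alpha\log_p N\rvert\le C}\lvert G_{N,k}-1\rvert\to 0$ for every fixed $C\in\mathbb{N}$.

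The second step is a first- and second-moment computation for $R_{N,k}$. Because the $X_n$ are independent and Haar-distributed on the compact group $\mathbb{Z}_p$, the difference $X_i-X_j$ is again Haar-distributed, whence $P(\lvert X_i-X_j\rvert_p\le p^{-k})=\mu(D_p(0,p^{-k}))=p^{-k}$; this gives $E[R_{N,k}]=N(N-1)p^{-k}$ and $E[G_{N,k}]=1-1/N$. For the variance I expand $R_{N,k}=\sum_{i\ne j}A_{ij}$ with $A_{ij}=\mathbf{1}[\lvert X_i-X_j\rvert_p\le p^{-k}]$ and use the simplification special to the i.i.d. case: whenever the pairs $\{i,j\}$ and $\{k,l\}$ share at most one index, conditioning on a common coordinate (or using full independence when there is none) gives $E[A_{ij}A_{kl}]=p^{-2k}=E[A_{ij}]E[A_{kl}]$, so $\mathrm{Cov}(A_{ij},A_{kl})=0$; the only surviving contributions come from $(k,l)\in\{(i,j),(j,i)\}$. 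This yields $\mathrm{Var}(R_{N,k})=2N(N-1)p^{-k}(1-p^{-k})\le 2N^{2}p^{-k}$, hence $\mathrm{Var}(G_{N,k})\le 2p^{k}/N^{2}$. For $\lvert k-\alpha\log_p N\rvert\le C$ we have $p^{k}\le p^{C}N^{\alpha}$, so $\mathrm{Var}(G_{N,k})\le 2p^{C}N^{\alpha-2}\le 2p^{C}/N$ using $\alpha\le 1$, and there are at most $2C+1$ such values of $k$.

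With these bounds I run the classical subsequence-plus-monotonicity argument. Along $N_m=m^{2}$, Chebyshev's inequality and a union bound over the $O_C(1)$ relevant indices give $\sum_m P\big(\exists k,\ \lvert k-\alpha\log_p N_m\rvert\le C:\ \lvert G_{N_m,k}-E\,G_{N_m,k}\rvert>\varepsilon\big)=O_{C,\varepsilon}\big(\sum_m m^{-2}\big)<\infty$, so Borel–Cantelli plus intersection over $\varepsilon=1/q$ shows that, almost surely, $\sup_{k:\ \lvert k-\alpha\log_p N_m\rvert\le C}\lvert G_{N_m,k}-1\rvert\to 0$. To fill in $N\in[N_m,N_{m+1}]$ I use that $R_{N,k}$ is non-decreasing in $N$ for fixed $k$, which gives the sandwich $\tfrac{N_m^{2}}{N_{m+1}^{2}}G_{N_m,k}\le G_{N,k}\le\tfrac{N_{m+1}^{2}}{N_m^{2}}G_{N_{m+1},k}$; since $N_{m+1}/N_m\to 1$, any $k$ admissible at $N$ is admissible at both $N_m$ and $N_{m+1}$ with the slightly larger constant $C+1$, and the subsequence estimate (applied with $C+1$) drives both ends of the sandwich to $1$. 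Intersecting finally over $C\in\mathbb{N}$ produces a single probability-one event on which $F_{N,\alpha,p}(s)\to 1$ for all $s>0$, i.e. the sequence has $\alpha$-Poissonian pair correlations. The degenerate case $\alpha=0$ is even easier: then $k_0(N,s)$ is independent of $N$, the bound $\mathrm{Var}(G_{N,k})\le 2p^{k}/N^{2}$ is directly summable, and Chebyshev/Borel–Cantelli alone yield $G_{N,k}\to 1$ for each fixed $k$, hence for all $s>0$ after intersecting over $k\in\mathbb{N}_0$.

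I expect the genuinely delicate point to be the uniformity in $s$ in the monotonicity step — carefully tracking how the admissible set of indices $k$ shifts as $N$ moves inside $[N_m,N_{m+1}]$, so that the subsequence estimate at $N_m$ and $N_{m+1}$ really covers all indices needed at intermediate $N$ — whereas the probabilistic core is routine once the off-diagonal covariances are seen to vanish. (One could alternatively phrase everything directly in terms of $s$ lying in $[p^{-C},p^{C}]$ rather than in terms of the integer $k$; the index formulation is used only to make the finiteness of the union bound transparent.)
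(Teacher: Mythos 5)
Your proof is correct, and its probabilistic skeleton --- first and second moments of the pair count, Chebyshev, Borel--Cantelli along a polynomial subsequence $N_m$, then a monotonicity sandwich for intermediate $N$ --- is the same as the paper's. Where you genuinely diverge is in the execution of the sandwich step for $\alpha=1$, which is the only delicate case (as the paper itself notes). The paper keeps the radius $s/N$ and the normalizer $\mu(D_p(0,s/N))$ as functions of $N$, and therefore has to track when this step function jumps by a factor of $p$ between consecutive subsequence points (its ``special'' $N_M$), which forces a case analysis and the auxiliary limit \eqref{eq:lim}. You instead reparametrize by the integer radius exponent, observing that $F_{N,\alpha,p}(s)=G_{N,k_0(N,s)}$ with $k_0$ confined to a window of bounded length around $\alpha\log_p N$ as $s$ ranges over a fixed compact set, and you exploit that for \emph{fixed} $k$ the count $R_{N,k}$ is exactly non-decreasing in $N$. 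This turns the discreteness of the $p$-adic radii --- the very source of the paper's complications --- into an advantage: the sandwich becomes a one-line inequality, the union bound over the $O_C(1)$ relevant indices $k$ is immediate, and you obtain local uniformity in $s$ and a unified treatment of all $0\le\alpha\le1$ as byproducts. Your variance computation is also slightly more careful than the paper's: you justify the vanishing of the off-diagonal covariances by conditioning on the shared coordinate and you keep the factor $2$ from the pairs $(i,j)$ and $(j,i)$, which the paper's displayed identity $\mathrm{Var}[R_{N,\alpha,p}(s)]=N(N-1)\mathrm{Var}[\mathds{1}]$ silently drops (harmless for the conclusion, but your bound $\mathrm{Var}(R_{N,k})\le 2N^2p^{-k}$ is the one that is actually true). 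The only point worth flagging is the degenerate boundary of the definition itself: for $\alpha=0$ and $s>1$ the normalizer $\mu(D_p(0,s))$ may exceed $1$ while the count is capped at $N(N-1)$, so the limit cannot be $1$ there; your convention $k_0=\max\{0,\lceil\alpha\log_p N-\log_p s\rceil\}$ resolves this in the natural way, and for $\alpha>0$ the issue disappears for large $N$, so this is a defect of the definition rather than of your argument.
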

The proof of Theorem~\ref{thm:ud:aPPC} has similarities to the real case but the normalization imposes additional challenges which require extra arguments.\\[12pt]
Due to a result which has been proved independently in \cite{ALP18} and \cite{GL17} for Poissonian pair correlations (i.e. $\alpha=1$) and which has been later generalized to weak pair correlations in \cite{Ste18, Coh21, Wei22a}, we know that a sequence in $[0,1]$ with $\alpha$-weak Poissonian pair correlations is uniformly distributed. This central property transfers to the p-adic integers as well.
\begin{theorem} \label{thm:aPPC:disc} Let $0 \leq \alpha \leq 1$. If $(x_n)_{n \in \mathbb{N}} \subset \mathbb{Z}_p$ has weak Poissonian pair correlations for $\alpha$, then $\lim_{N \to \infty} D_N(x_n) = 0$.
\end{theorem}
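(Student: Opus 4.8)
The plan is to argue by contradiction: I will show that if $D_N(x_n)$ does not converge to $0$, then for a suitably chosen fixed $s$ the p-adic pair correlation function $F_{N,\alpha,p}$ overshoots its limit value $1$ along a subsequence, contradicting the hypothesis.

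\emph{Combinatorial reformulation of $F_{N,\alpha,p}$.} Fix $k\in\mathbb{N}$ and partition $\mathbb{Z}_p$ into the $p^k$ discs of radius $p^{-k}$ (the cosets of $p^k\mathbb{Z}_p$). Writing $A_D=A_D(N)=\#\{1\le n\le N:x_n\in D\}$, so $\sum_D A_D=N$, counting ordered pairs lying in a common disc gives $\#\{1\le i\neq j\le N:|x_i-x_j|_p\le p^{-k}\}=\sum_D A_D(A_D-1)=\sum_D A_D^2-N$. Since $|x_i-x_j|_p$ is always a power of $p$, the condition $|x_i-x_j|_p\le s/N^\alpha$ is the same as $|x_i-x_j|_p\le p^{-k_0}$, where $p^{-k_0}=\mu(D_p(0,s/N^\alpha))$ is the largest power of $p$ not exceeding $s/N^\alpha$; hence, for the level-$k_0$ partition,
$$F_{N,\alpha,p}(s)=\frac{p^{k_0}}{N^2}\Bigl(\sum_D A_D^2-N\Bigr),\qquad k_0=k_0(N,s).$$
Minimality of $k_0$ also gives $p^{k_0}<pN^\alpha/s$, so $p^{k_0}/N<pN^{\alpha-1}/s\le p/s$ whenever $\alpha\le1$.

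\emph{Extracting a single bad disc at a fixed level.} Assume $D_N(x_n)\not\to0$; choose $\varepsilon_0>0$ and $N_i\to\infty$ so that for each $i$ there is a disc $D_p(z_i,p^{-k_i})$ whose relative count deviates from $p^{-k_i}$ by more than $\varepsilon_0/2$. If the $k_i$ stay bounded, some level $m$ recurs infinitely often; along that subsequence the bad disc either has a surplus $A_D\ge(p^{-m}+\delta')N_i$, or a deficit, in which case one of its $p^m-1$ siblings has a surplus $\ge(p^{-m}+\tfrac{\varepsilon_0/2}{p^m-1})N_i$. If instead $k_i\to\infty$, then for large $i$ the deviation can only be a surplus, so the bad disc has relative count $\ge\varepsilon_0/2$, hence the enclosing level-$m^*$ disc (with $m^*$ minimal such that $p^{-m^*}<\varepsilon_0/4$) has relative count $\ge p^{-m^*}+\varepsilon_0/4$. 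Since there are only finitely many discs at a given level, after passing to a further subsequence we obtain a fixed level $m$, a fixed disc $D^*$ of radius $p^{-m}$, a constant $\delta'>0$, and a subsequence along which $A_{D^*}(N_i)\ge(p^{-m}+\delta')N_i$.

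\emph{Propagation to fine scales and the contradiction.} A surplus at level $m$ forces $\sum_D A_D^2$ above its minimum: estimating the remaining $p^m-1$ counts by Cauchy--Schwarz yields $\sum_D A_D^2\ge N_i^2(p^{-m}+\delta'^2)$ for the level-$m$ partition. For any finer level $k_0\ge m$, Cauchy--Schwarz inside each level-$m$ disc gives $\sum_D A_D^2\le p^{\,k_0-m}\sum_{D'}A_{D'}^2$, where the $A_{D'}$ are the level-$k_0$ counts; combining,
$$F_{N_i,\alpha,p}(s)=\frac{p^{k_0}}{N_i^2}\Bigl(\sum_{D'}A_{D'}^2-N_i\Bigr)\ \ge\ 1+c_2-\frac{p^{k_0}}{N_i},\qquad c_2:=p^m\delta'^2>0.$$
If $\alpha\in(0,1]$, fix at the outset $s>2p/c_2$ (legitimate since weak pair correlations are assumed for every $s\ge0$); then $k_0=k_0(N_i,s)\to\infty$, so $k_0\ge m$ eventually, and $F_{N_i,\alpha,p}(s)\ge1+c_2-p/s>1+c_2/2$ for all large $i$. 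If $\alpha=0$, instead take $s=p^{-m}$, so that $k_0(N_i,s)=m$ for every $i$ and $p^{k_0}/N_i=p^m/N_i\to0$, giving $\liminf_i F_{N_i,0,p}(s)\ge1+c_2$. Either way this contradicts $\lim_{N\to\infty}F_{N,\alpha,p}(s)=1$, so $D_N(x_n)\to0$.

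\emph{Main obstacle.} The real difficulty is that for $\alpha>0$ the hypothesis at a fixed $s$ controls the pair count only at the moving scale $k_0(N,s)\asymp\alpha\log_pN\to\infty$, never at any fixed level. The argument gets around this by showing that a mass surplus at a fixed level $m$ is inherited, with the same multiplicative constant $c_2$, by every finer level, and that the single surviving error term $p^{k_0}/N\le p/s$ is suppressed by taking $s$ large (or, when $\alpha=0$, by pinning the scale via $s=p^{-m}$). The other point needing care is making the extraction of one recurring level $m$ and one disc $D^*$ uniform across the bounded-$k_i$ and unbounded-$k_i$ regimes.
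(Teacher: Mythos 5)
Your proof is correct, but it takes a genuinely different route from the paper. The paper proves a quantitative lemma (Lemma~\ref{lem:aPPC:disc}) in the spirit of Grepstad--Larcher: assuming a bound $F(K,N)$ on the deviation of the pair-counting function, it bounds $ND_N^*$ by working with the annuli $A_i=\{n:|z-x_n|_p\in[iK/N^\alpha,(i+1)K/N^\alpha)\}$ and a chain of algebraic manipulations transferred from the real-variable method, and Theorem~\ref{thm:aPPC:disc} then follows by taking $F(K,N)=\varepsilon N^{2-\alpha}$. You instead exploit the ultrametric structure directly: at each level $k$ the discs of radius $p^{-k}$ partition $\mathbb{Z}_p$, so the pair count is exactly $\sum_D A_D^2-N$; a surplus in a single disc at a fixed level $m$ (extracted from the failure of $D_N\to 0$, with the bounded and unbounded $k_i$ regimes both handled) forces $\sum_D A_D^2\ge N^2(p^{-m}+\delta'^2)$, and this excess propagates by Cauchy--Schwarz to every finer level $k_0\ge m$ with the uniform constant $c_2=p^m\delta'^2$; choosing $s>2p/c_2$ to suppress the only error term $p^{k_0}/N\le p/s$ (or $s=p^{-m}$ when $\alpha=0$) yields $\liminf_i F_{N_i,\alpha,p}(s)\ge 1+c_2/2>1$, contradicting the hypothesis. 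I checked the key estimates ($f(a)=a^2+\tfrac{(N-a)^2}{p^m-1}=\tfrac{p^m}{p^m-1}(a-Np^{-m})^2+N^2p^{-m}$, the level-refinement inequality, and $p^{k_0}<pN^\alpha/s$) and they are all valid. What your approach buys is a cleaner and more elementary argument: the annulus bookkeeping of the real method is replaced by an exact counting identity that only exists in the p-adic setting. What it gives up is the explicit quantitative relation between $D_N$ and the rate of convergence of $F_{N,\alpha,p}$ that the paper's lemma provides. Two minor points you should make explicit: if the $k_i$ neither stay bounded nor tend to infinity, first pass to a subsequence realizing one of the two regimes; and for $\alpha>0$ note that $s/N^\alpha<1$ for large $N$ so that $k_0\ge 1$ and the level-$k_0$ discs genuinely partition $\mathbb{Z}_p$.
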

Also here, most ingredients from the proof in the real case, see \cite{GL17} and \cite{Wei22a}, can be turned into precise arguments in the p-adic setting.\\[12pt]
Finally, we discuss the role of Kronecker sequences, which are to the author's knownledge the only known examples of sequences possessing the optimal order of convergence in terms of the p-adic discrepancy. Again we obtain a result which is totally parallel of the real case, see \cite{Wei22a}.
\begin{theorem} \label{thm:Kronecker} Let $a \in \mathbb{Z}_p^\times$ be a unit and $b \in \mathbb{Z}_p$ be arbitrary. Then $(na+b)_{n \in \mathbb{N}}$ has weak Poissonian pair correlations for any $0 \leq \alpha < 1$ but does not have Poissonian pair correlations, i.e. for $\alpha = 1$
\end{theorem}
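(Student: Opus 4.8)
The plan is to exploit that, for a Kronecker sequence, the $p$-adic distance between two terms depends only on the difference of the indices. Since $a\in\mathbb{Z}_p^\times$ we have $|x_i-x_j|_p=|(i-j)a|_p=|i-j|_p$, so $b$ is irrelevant and the condition $|x_i-x_j|_p\le s/N^\alpha$ is equivalent to $q\mid(i-j)$, where $q:=p^{k_0}$ and $k_0=k_0(N,\alpha,s)$ is the smallest integer with $p^{-k_0}\le s/N^\alpha$; recall $\mu(D_p(0,s/N^\alpha))=1/q$. First I would make this reduction explicit and then count: splitting $\{1,\dots,N\}$ into residue classes modulo $q$ and writing $N=mq+r$ with $0\le r<q$ (so that $r$ classes have $m+1$ elements and $q-r$ have $m$), one gets
\begin{align*}
\#\{1\le i\ne j\le N:\ q\mid(i-j)\}=r(m+1)^2+(q-r)m^2-N=qm(m-1)+2rm,
\end{align*}
and substituting $N^2=(mq+r)^2$ yields the closed form
$$F_{N,\alpha,p}(s)=\frac{q}{N^2}\bigl(qm(m-1)+2rm\bigr)=1-\frac{q^2m+r^2}{N^2}.$$
Everything then reduces to estimating the error term $(q^2m+r^2)/N^2$.

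For $0\le\alpha<1$ and fixed $s>0$ I would argue as follows. If $s/N^\alpha\ge1$ then $k_0=0$, $q=1$, $r=0$, and $F_{N,\alpha,p}(s)=1-1/N\to1$ directly. Otherwise $k_0\ge1$, and minimality of $k_0$ gives the two-sided bound $N^\alpha/s\le q<pN^\alpha/s$, so $q=\mathcal{O}(N^\alpha)$. Since $mq\le N$ we obtain $q^2m\le qN=\mathcal{O}(N^{1+\alpha})$ and $r^2<q^2=\mathcal{O}(N^{2\alpha})$, hence $(q^2m+r^2)/N^2=\mathcal{O}(N^{\alpha-1})+\mathcal{O}(N^{2\alpha-2})\to0$ because $\alpha<1$. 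For a fixed $s>0$ one of these two situations holds for all sufficiently large $N$, so $\lim_{N\to\infty}F_{N,\alpha,p}(s)=1$, which is exactly the assertion that $(na+b)_{n\in\mathbb{N}}$ has weak Poissonian pair correlations for $\alpha$.

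For $\alpha=1$ it is enough to exhibit a single value of $s$ for which the limit is not $1$. Take $s=1$: then $q$ is the smallest power of $p$ with $q\ge N$, in particular $q\ge N$, while every pair $1\le i\ne j\le N$ satisfies $0<|i-j|\le N-1<q$ and therefore $q\nmid(i-j)$. Thus the counting set is empty, $F_{N,1,p}(1)=0$ for every $N\in\mathbb{N}$, and $\lim_{N\to\infty}F_{N,1,p}(1)=0\ne1$; hence the sequence does not have Poissonian pair correlations.

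I expect no genuine obstacle beyond the bookkeeping around the ceiling implicit in $k_0$: the only delicate point is the two-sided estimate $N^\alpha/s\le q<pN^\alpha/s$, which is precisely what forces $q^2m/N^2\to0$ when $\alpha<1$ but leaves it of order $1$ when $\alpha=1$ (indeed, along the subsequence $N=p^j$ one computes $F_{p^j,1,p}(s)=1-p^{-\lfloor\log_p s\rfloor}$ for $s\ge1$, which never equals $1$). This ``polynomial order versus linear order'' dichotomy for $q$ is the conceptual content of the theorem, and it runs parallel to the real-variable picture of \cite{Wei22a}.
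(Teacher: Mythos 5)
Your proof is correct, and for the positive part it takes a genuinely different route from the paper. The paper derives weak Poissonian pair correlations for $0<\alpha<1$ from discrepancy theory: either by combining Proposition~\ref{prop:disc:aPPC} (polynomially decaying discrepancy implies $\alpha$-Poissonian pair correlations) with Beer's bound $D_N(na+b)=\mathcal{O}(1/N)$ from Theorem~\ref{thm:disc:kron:quant}, or alternatively via Theorem~\ref{thm:beer} on the discrepancy $E_{N^2}$ of the difference sequence. You instead exploit $|x_i-x_j|_p=|i-j|_p$ to compute $F_{N,\alpha,p}(s)$ \emph{exactly}: the residue-class count $qm(m-1)+2rm$ and the resulting closed form $F_{N,\alpha,p}(s)=1-(q^2m+r^2)/N^2$ check out, and the two-sided bound $N^\alpha/s\le q<pN^\alpha/s$ then settles both regimes at once. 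Your approach buys a self-contained elementary argument with an explicit error term of order $N^{\alpha-1}$; the paper's approach buys generality, since Proposition~\ref{prop:disc:aPPC} applies to any sequence with polynomially decaying discrepancy. For the negative part at $\alpha=1$ your argument coincides with the paper's: both observe that $q=p^{k_0}\ge N>|i-j|$ forces the counting set to be empty (the paper tests $s<1$, you test $s=1$). One cosmetic caveat: your claim that $s/N^\alpha\ge1$ forces $k_0=0$ is accurate only for $1\le s/N^\alpha<p$ (for larger ratios $k_0$ is negative and $\mu(D_p(0,s/N^\alpha))>1$); this regime is transient for $\alpha>0$ and fixed $s$, and the residual boundary case $\alpha=0$, $s\ge p$ is equally untreated in the paper, so nothing essential is lost.
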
 
In Section~\ref{sec:kronecker} we give two different proofs for this fact. One proof relies on the ideas developed in \cite{Wei22a}, while the other other uses an inequality from \cite{Bee72} on the p-adic discrepancy of difference sequences. 
\paragraph{Future research.} As this paper is intended as a starting point for the topic of p-adic Poissonian pair correlations, it naturally cannot cover the topic in full generality and leaves a lot of questions open. We ask here only three questions, which in the author's view would be particularly interesting to answer:
\begin{itemize}
\item While Theorem~\ref{thm:ud:aPPC} indicates that \textit{many} sequences have $\alpha$-Poissonian pair correlations, Theorem~\ref{thm:Kronecker} excludes the canonical candidate for possessing this property. How can one find explicit examples?
\item It is well-known in the real case that $\alpha$-Poissonian pair correlations of a sequence $(x_n)_{n \in \mathbb{N}}$ imply $\beta$-Poissonian pair correlations for all $0 \leq \beta \leq \alpha$, see \cite{HZ21}. Does this property also hold in the p-adic setting? This question is open for higher order pair correlations as well. The main challenge in the p-adic setting is that there is no characterization of the Poissonian pair correlations property by an integral as in \cite{HZ21}, Lemma~9.
\item Moreover, it would be natural to further generalize the notion of p-adic Poissonian pair correlation to $g$-adic numbers and the adeles. What properties can be preserved by such a generalization? For both classes, a theory of uniform distribution has already been developed, see \cite{Mei67} and the references mentioned in \cite{KN74}.
\end{itemize}

\section{Proof of Theorem~\ref{thm:ud:aPPC}}
In this section it is proven that a sequence of independent, uniformly distributed random variables in $\mathbb{Z}_p$, generically has weak Poissonian pair correlations for any $0 \leq \alpha \leq 1$. If $\alpha < 1$, the claim follows by similar arguments as in \cite{Mar07}, \cite{HKL19} and \cite{HZ21}, which all rely on the Chebyshev-inequality and the Borel-Cantelli Lemma and only some additional care needs to be taken about the normalizing factors. However, the arguments given in the mentioned references need to be significantly extended for $\alpha = 1$ . 
\begin{proof}[Proof of Theorem~\ref{thm:ud:aPPC}] For $s > 0$ let us write 
$$R_{N,\alpha,p}(s) :=  \# \left\{ 1 \leq i \neq j \leq N \, : \, \left| X_i - X_j \right|_p  \leq \frac{s}{N^\alpha} \right\}.$$
This quantity is a real-value random variable and hence we can apply classical (real) probability theory to derive 
\begin{align*}
\mathbb{E} & = \left[ R_{N,\alpha,p}(s) \right] = N (N-1) \int_{\mathbb{Z}_p}\int_{D_p(x_1,\frac{s}{N^\alpha})}1\mathrm{d}x_2\mathrm{d}x_1\\ & = N(N-1) \mu(D_p(0,s/N^\alpha))
\end{align*}
If in addition $\frac{s}{N^\alpha}\leq 1$ holds, then
\begin{align*}
    \mathrm{Var}\left[R_{N,\alpha,p}(s)\right] & = N(N-1) \mathrm{Var}\left[\mathds{1}_{\left| X_{i_1}-X_{i_2}\right|_p\leq\frac{s_l}{N^\alpha}}\right]\\
    &=N(N-1) (1-\mu(D_p(0,s/N^\alpha)))\mu(D_p(0,s/N^\alpha))\\
    &\leq N^2\mu(D_p(0,s/N^\alpha).
\end{align*}
At first we consider the case $\alpha < 1$. Let $\varepsilon > 0$. Using Chebyshev's inequality, we obtain
$$P[|F_{N,\alpha,p}(s) - E[F_{N,\alpha,p}(s)]| \geq \varepsilon] \leq \frac{1}{\varepsilon^2} \textrm{Var}\left[F_{N,\alpha,p}\right] \leq \frac{1}{N^2 \mu(D_p(0,s/N^\alpha))}.$$
Then $\mu(D_p(0,s/N^\alpha)) \geq \frac{1}{p} s/N^\alpha$ yields
\begin{align*}
    \sum_{N=1}^\infty & P[|F_{N,\alpha,p}(s) - E[F_{N,\alpha,p}(s)]| \geq \varepsilon] \leq \frac{1}{\varepsilon^2} \sum_{N=1}^\infty \frac{1}{N^2 \mu(D_p(0,s/N^\alpha))}\\ 
    & \leq \frac{p}{\varepsilon^2 \cdot s} \sum_{N=1}^\infty \frac{1}{N^{2-\alpha}} < \infty.
\end{align*}
By the first Borel-Cantelli lemma we have
$$P[\lim_{N \to \infty} |F_{N,\alpha,p}(s) - E[F_{N,\alpha,p}(s)]| \geq \varepsilon] = 0.$$
Hence 
$$\lim_{N \to \infty} F_{N,\alpha,p}(s) = \lim_{N \to \infty} E[F_{N,\alpha,p}(s)] = 1$$
holds almost surely. This finishes the proof for $\alpha < 1$.\\[12pt]
In the case $\alpha = 1$, choose some $\gamma > 1$ and let $N_M = \lceil M^\gamma \rceil$ for $M \in \mathbb{N}$. Then the claim follows for $N_M$ instead of $N$ by essentially the same argument as in the case $\alpha < 1$. Thus, the desired convergence holds for $N_M$ instead of $N$. In order to obtain the result for $N$, we use a similar argument as e.g. in \cite{HKL19} but need to amend it at some points.\\[12pt]
First we note that $\frac{1}{p} s/M^\gamma \leq \mu(D_p(0,s/M^\gamma)) \leq s/M^\gamma$ implies
$$\frac{1}{p} \frac{(M+1)^\gamma}{M^\gamma} \frac{\mu(D_p(0,s/M^\gamma))}{\mu(D_p(0,s/(M+1)^\gamma))} \leq p \frac{(M+1)^\gamma}{M^\gamma}$$
and therefore the exponents $k,l$ with $\mu(D_p(0,s/M^\gamma))) = p^{-k}$ and $\mu(D_p(0,s/(M+1)^\gamma))) = p^{-l}$ may at most distinguish by $1$. If this happens we call such an $N_M$ special. Moreover for all $k \in \mathbb{N}$ there exists an $N_0 \in \mathbb{N}$ with the following property for all $M \geq N_0$: if $\mu(D_p(0,s/M^{\gamma})) = p \mu(D_p(0,s/(M+1)^\gamma))$ then  $\mu(D_p(0,s/(M+j)^\gamma)) = \mu(D_p(0,s/(M+1)^\gamma))$ for all $1 \leq j \leq k$. This is true because
$$- \sum_{j=0}^k\log \left( \frac{s}{(M+j)^\gamma}\right) \leq \log(p)$$
if $M$ is large enough.\\[12pt] 
For a moment, we now exclude all non-special values from the sequence $N_M$ and obtain a new sequence $N_M^*$ with $\mu(D_p(0,s/N_{M}^{*\gamma})) = p \mu(D_p(0,s/N_{M+1}^{*\gamma}))$ for all $M \in \mathbb{N}$ and  $N_{M+1}^* / N_M^* \to p$ for $M \to \infty$. As the restriction to $N_M^*$ preserves the convergence property, it follows that
\begin{align} \label{eq:lim}
\lim_{M \to \infty} \frac{\# \left\{ 1 \leq i \neq j \leq N \, : \, \left| X_i - X_j \right|_p  \leq \frac{s}{N_{M}^{*\gamma}} \right\} }{\# \left\{ 1 \leq i \neq j \leq N \, : \, \left| X_i - X_j \right|_p  \leq \frac{s}{N_{M+1}^{*\gamma}} \right\} } = p
\end{align}
as well.\\[12pt]
Now consider $N$ with $N_M \leq N \leq N_{M+1}$. Then
\begin{align} \label{ineq:F_N}
\begin{split}
& N_M^{2} \mu(D_p(0,s/N_{M+1}) F_{N_M,1,p}\left(\frac{N_M}{N_{M+1}} s\right)\\
& \leq N^2 \mu(D_p(0,s/N))F_{N,1,p}(s)\\
& \leq N_{M+1}^{2} \mu(D_p(0,s/N_M) F_{N_{M+1},1,p}\left(\frac{N_{M+1}}{N_{M}} s\right).
\end{split}
\end{align}
At first we consider all $N$ with $N_M$ being special. We either have $D_p(0,s/N^\gamma)) = D_p(0,s/N_M^\gamma)$ or $D_p(0,s/N^\gamma) = D_p(0,s/N_{M+1}^\gamma)$. If the first holds true, then 
\begin{align*}
N_M^2 & \mu(D_p(0,s/N_{M+1}^{2}) F_{N_M,1,p}\left(\frac{N_M}{N_{M+1}} s\right)\\ 
&= \frac{1}{p} \# \left\{ 1 \leq i \neq j \leq N \, : \, \left| X_i - X_j \right|_p  \leq \frac{s}{N_{M+1}^{\gamma}} \right\}
\end{align*}
Dividing both sides by $N^2\mu(D_p(0,s/N^\gamma)) = N^2 \mu(D_p(0,s/N_M^\gamma)$ and observing $N/N_M \to 1$ as $M \to \infty$ and \eqref{eq:lim}, then we see that $1$ is asymptotically a lower bound for $F_{N,1,p}(s)$. If on the other hand $\mu(D_p(0,s/N^\gamma) = \mu(D_p(0,s/N_{M+1}^\gamma)$, then
\begin{align*}
N_M^2 & \mu(D_p(0,s/N_{M+1}^{*2}) F_{N_M,1,p}\left(\frac{N_M}{N_{M+1}} s\right)\\ 
&= \# \left\{ 1 \leq i \neq j \leq N \, : \, \left| X_i - X_j \right|_p  \leq \frac{s}{N_{M+1}^{*\gamma}} \right\}
\end{align*}
Dividing by $N^2\mu(D_p(0,s/N^\gamma)) = N^2 \mu(D_p(0,s/N_{M+1}^\gamma)$ and observing $N_{M+1}/N \to 1$, it follows as well that $1$ is an asymptotic lower bound.\\[12pt]
If $N_M$ is not special, then \eqref{ineq:F_N} and $N_M/N_{M+1} \to 1$, directly yield $1$ as an asymptotic lower bound for $F_{N,1,p}(s)$. We therefore know, that $1$ is a lower bound for $\lim_{N \to \infty} F_{N,1,p}(s)$, because we have covered all possible cases. Regarding the upper bound we can argue similarly and obtain for it $1$. Thus, 
$$\lim_{N \to \infty} F_{N,1,p}(s) = 1$$
almost surely, i.e., the claim also holds true for $\alpha = 1$.
\end{proof}
\paragraph{Realization of a random uniformly distributed sequence in $\mathbb{Z}_p$.} One might ask, how Theorem~\ref{thm:ud:aPPC} can be checked empirically on a computer. Therefore, we now shortly explain how a sequence of random p-adic integers can be algorithmically realized: As in the real case, random number generation can only be made up to a given precision. In the p-adic setting this is the number of terms involved in the Laurent series of $z \in \mathbb{Z}_p$, see \cite{Neu99} for details. We need to assume that this number $K$ has been fixed. The standard approach for generating a uniform distribution in $\mathbb{Z}_p$ then works as follows: For $k=0,\ldots,K$ draw $a_k \in \{0,\ldots,p-1\}$ from a discrete uniform distribution. The random number $z_i$ is given as $z_i = \sum_{k=0}^K a_k p^k$. It can be immediately seen that $\xi_k(z)$ is equal to $p^{-k}$ for all $k \leq K$ as is the desired precision.

\section{Proof of Theorem~\ref{thm:aPPC:disc}}
In order to prove that weak Poissonian pair correlations for $0 \leq \alpha \leq 1$ imply convergence of the p-adic discrepancy to $0$, we show the following stronger assertion.
\begin{lemma} \label{lem:aPPC:disc}
Suppose there exists a function $F: \mathbb{N} \times \mathbb{N} \to \mathbb{R}$ which is monotonically increasing in its first argument, and which satisfies
\begin{align*} \max_{s=1,\ldots,K} & \left| \frac{N^{-\alpha}}{\mu(D_p(0,s/N^\alpha))} \# \left\{ 1\leq l \neq m \leq N \, : \, |x_l-x_m|_p < \frac{s}{N^\alpha} \right\} - N^{2-\alpha}) \right|\\ & < F(K,N)
\end{align*}
for some $0 < \alpha \leq 1$ and all $K \leq N/2$. Then there exists an integer $N_0>0$ such that for all $N \in \mathbb{N}$ with $N \geq N_0$, and arbitrary $K$ satisfying
$$\frac{1}{2}N^{2/5 \alpha} \leq K \leq N^{2/5 \alpha}$$
we have
$$ND_N^*(x_n) \leq 5 \max \left(N^{1-1/5\alpha},\sqrt{N^\alpha \cdot F(K^2,N)} \right)=:H(N,K).$$
\end{lemma}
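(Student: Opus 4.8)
The plan is to mimic the real-case argument of Gerber--Larcher / Weiß, where one bounds the discrepancy contribution of a single disc $D_p(z,1/p^k)$ by comparing the observed count of points to the expected count $N p^{-k}$, and then uses the pair-correlation hypothesis to control the sum of squares of these deviations. Concretely, for a fixed level $k$, partition $\mathbb{Z}_p$ into the $p^k$ discs of radius $p^{-k}$, say $D_1,\dots,D_{p^k}$, and write $A_i := \#\{1\le n\le N : x_n\in D_i\}$. The key identity is that the number of ordered pairs $(l,m)$ with $x_l,x_m$ in a common disc of radius $p^{-k}$ equals $\sum_i A_i^2$, so that
\begin{align*}
\sum_i \left(A_i - \frac{N}{p^k}\right)^2 = \sum_i A_i^2 - \frac{N^2}{p^k} = \#\{1\le l\ne m\le N : |x_l-x_m|_p \le p^{-k}\} + N - \frac{N^2}{p^k}.
\end{align*}
Here I would choose $k$ so that $\mu(D_p(0,s/N^\alpha)) = p^{-k}$ for an appropriate integer $s$ in the admissible range, so that the pair-correlation hypothesis bounds the right-hand side: the hypothesis gives $\#\{l\ne m : |x_l-x_m|_p \le s/N^\alpha\}$ within $p^{-k} N^\alpha F(K,N)$ of $p^{-k}N^{2}$ (after multiplying through), hence $\sum_i (A_i - N/p^k)^2$ is small, of order $p^{-k}N^\alpha F(K,N) + N$.

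Next, from the $\ell^2$ bound on the deviations I would extract an $\ell^\infty$ bound: trivially $\max_i |A_i - N/p^k| \le (\sum_i(A_i-N/p^k)^2)^{1/2}$, which controls $N D_N^*$ restricted to discs of that particular radius $p^{-k}$. The subtlety — and I expect this to be the main obstacle — is that $D_N^*(x_n)$ is a supremum over \emph{all} levels $k\ge 1$ and all centers, whereas the pair-correlation hypothesis only gives usable information for the finitely many values $s=1,\dots,K$, i.e. for $k$ in a window determined by $N^\alpha/K \lesssim p^{-k} \lesssim N^\alpha$ roughly. So one must handle the remaining levels separately: for very fine levels (large $k$, small discs), the expected count $N p^{-k}$ is small, so a disc can contain at most a few points and $|A_i - N p^{-k}|$ is automatically $O(N^{1-1/5\alpha})$ or better once $p^{-k}$ is below a threshold — this is where the lower cutoff $K\ge \tfrac12 N^{2/5\alpha}$ and the exponent $1-1/5\alpha$ enter; for coarse levels (small $k$) one uses that a coarse disc is a union of finer discs, so its deviation is bounded by summing finer deviations, again controlled by the already-established bound. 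Balancing the two regimes — the $\ell^2$-derived bound $\sqrt{p^{-k}N^\alpha F(K,N)+N}$ on one side and the trivial/combinatorial bound on the other — against the choice $K\asymp N^{2/5\alpha}$ is what produces the stated $H(N,K) = 5\max(N^{1-1/5\alpha}, \sqrt{N^\alpha F(K^2,N)})$; note the appearance of $F(K^2,N)$ rather than $F(K,N)$ reflects that to cover all relevant levels one needs $s$ up to roughly $K^2$ in the hypothesis.

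Finally I would assemble the cases: fix $N\ge N_0$ (with $N_0$ chosen so that the approximations $N-1\sim N$, the cutoff inequalities, and the "$p^{-k}$ below threshold forces few points" estimates all hold), take any $k\ge 1$ and any center $z$, and in each of the three regimes (coarse, intermediate/covered by the hypothesis, fine) bound $|A(z,k) - Np^{-k}|$ by the corresponding term, check that each is $\le H(N,K)$, take the supremum, and divide by $N$ — wait, no division, the statement is $N D_N^*(x_n)\le H(N,K)$ directly. The constant $5$ is slack absorbed from the three-way case split plus the monotonicity of $F$ in its first argument, which lets one replace the various $F(\cdot,N)$ appearing with a single $F(K^2,N)$. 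The one genuinely delicate point to get right is the arithmetic of which integer values $s\le K^2$ realize which radii $p^{-k}$ — since $p$-adic radii are quantized, the map $s\mapsto k_0(s)$ (smallest $k_0$ with $p^{-k_0}\le s/N^\alpha$) is a step function, and one must ensure that as $s$ ranges over $1,\dots,K^2$ the corresponding $k$ sweeps the entire intermediate window without gaps; this is exactly the kind of bookkeeping already carried out in the proof of Theorem~\ref{thm:ud:aPPC} above, so I would reuse that mechanism.
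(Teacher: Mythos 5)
Your proposal takes a genuinely different route from the paper. The paper argues by contradiction in the style of Grepstad--Larcher: it assumes a single disc $D_p(z,p^{-k})$ witnesses a discrepancy violation larger than $H(N,K)$, decomposes $\mathbb{Z}_p$ into annuli $A_i=\{n: |z-x_n|_p\in[iK/N^\alpha,(i+1)K/N^\alpha)\}$ centered at that one bad point, lower-bounds the pair count $\mathcal{H}_L$ by a convexity/Cauchy--Schwarz argument applied to the two groups of annuli inside and outside the bad disc, and contradicts the pair-correlation hypothesis. You instead work directly and globally: since discs of radius $p^{-k}$ partition $\mathbb{Z}_p$ and ``$|x_l-x_m|_p\le p^{-k}$'' is an equivalence relation, the pair count at scale $p^{-k}$ \emph{equals} $\sum_i A_i^2-N$, giving the exact identity $\sum_i(A_i-Np^{-k})^2=\#\{l\ne m\}+N-N^2p^{-k}$, from which an $\ell^\infty\le\ell^2$ step controls every disc at that scale simultaneously. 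This is cleaner and is exactly the simplification the ultrametric buys you (it is unavailable in $[0,1)$, which is why the real-case proof needs the annulus machinery); your identity and the resulting bound $\sum_i(A_i-Np^{-k})^2\le N+p^{-k}N^\alpha F(K^2,N)$ are correct, and the exponent bookkeeping ($K^2\le N^{4\alpha/5}\le N^\alpha$, $\sqrt{N}\le N^{1-\alpha/5}$) does land inside $H(N,K)$ for the covered scales.

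Two steps need repair before this is a proof. First, your treatment of fine scales ($p^{-k}<1/N^\alpha$) is a non sequitur as written: a small \emph{expected} count $Np^{-k}$ does not force a small \emph{actual} count, since many points could cluster in one tiny disc. The correct argument is containment: a fine disc sits inside a disc at the smallest covered scale $p^{-k'}\asymp 1/N^\alpha$, so $A_i\le A_{i'}\le Np^{-k'}+\sqrt{N+\cdots}\lesssim pN^{1-\alpha}+\cdots\le H(N,K)$. Second, for coarse scales ($p^{-k}>K^2/N^\alpha$) you must keep the $\ell^2$ information when summing over the $M\approx p^{-k}N^\alpha/K^2$ sub-discs at the largest covered scale: Cauchy--Schwarz gives $|A-Np^{-k}|\le\sqrt{M}\bigl(\sum_j(A_j-Np^{-k'})^2\bigr)^{1/2}\le\sqrt{N^{1+\alpha}/K^2+N^\alpha F(K^2,N)}$, which fits into $H(N,K)$ for $N\ge N_0$; naively summing $M$ copies of the $\ell^\infty$ bound loses a factor of $\sqrt{M}\approx N^{\alpha/10}$ on the $F$-term and does \emph{not} close. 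With these two fixes (and the routine check that every power $p^{-k}$ in $[1/N^\alpha,K^2/N^\alpha]$ is realized by some integer $s\le K^2$, which your quantization remark already anticipates), your argument goes through and is arguably more transparent than the paper's.
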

The proof relies on the method developed in \cite{GL17} and is in large parts parallel to it. The additional ingredient in our proof is to transfer the p-adic setting to the real one. Thereby it is necessary to take track of constants and exponents.
\begin{proof} Assume to the contrary that the claim is wrong, i.e. $ND_N^* > H(N,K)$ for infinitely many $N,K$. Then there exists $1 < N_1 < N_2 < \ldots$ and corresponding $K_1,K_2, \ldots$ as well as $z_1,z_2, \ldots$ and $k_1,k_2,\ldots$ such that without loss of generality we have
\begin{align} \label{ineq:disc_p}
\# \{ 1 \leq n \leq N_j \, : \, x_n \in D_p(z_j,p^{-k_j})\} - N_j p^{-k_j} > H(N_j,K_j)
\end{align}
(the opposite inequality can be treated very similarly). To simplify notation we write $N=N_j, k=k_j, K=K_j$ and $z = z_j$ for fixed $j$. Let 
$$\mathcal{H}_L := \# \left\{ 1 \leq l \neq m \leq N \, : \, |x_l-x_m|_p \leq \frac{KL}{N^\alpha}\right\}.$$
Furthermore we set
$$A_i := \left\{ 1 \leq n \leq N \, : \, |z-x_n|_p \in \left[ i \cdot \frac{K}{N^{\alpha}}, (i+1) \cdot \frac{K}{N^\alpha} \right) \right\}.$$
for $i=0,1,\ldots,\lfloor N^\alpha/K\rfloor -1$ and 
$$A_{\lfloor N^\alpha/K \rfloor} := \left\{ 1 \leq n \leq N \, : \, |z-x_n|_p \in \left[ \left\lfloor \frac{N^\alpha}{K}  \right\rfloor\cdot \frac{K}{N^\alpha} , 1 \right) \right\}.$$
If $x_l \in A_i$ and $x_m \in A_{i+j}$ with $j \in \{0,\ldots,L-1\},$ then
$$|x_l-x_m|_p \leq \max \left( |z-x_l|_p,|z-x_m|_p\right) \leq \frac{LK}{N^\alpha}$$
and thus the pair $(x_l,x_m)$ contributes to $\mathcal{H}_L$. We can hence deduce that
\begin{align} \label{ineq:HL} \mathcal{H}_L \geq \sum_{i=0}^{\lfloor N^\alpha/K \rfloor} (A_i(A_i-1)+2A_i(A_{i+1}+\ldots+A_{i+L-1})).
\end{align}
By almost verbatim the same algebraic manipulations of \eqref{ineq:disc_p} and \eqref{ineq:HL} as in \cite{GL17}, we get
$$\frac{N^{-\alpha}}{N^{2-\alpha}\mu(D_p(0,LK/N^\alpha))} \mathcal{H}_L \geq \frac{2}{K+1}Z_K - \frac{1}{N\mu(D_p(0,LK/N^\alpha))}$$
with
\begin{align*}Z_K & \geq \frac{1}{N^2 \mu(D_p(0,K^2/N^\alpha))} \left( \frac{K^2(Np^{-k}+H)^2}{K+\lfloor N^\alpha / K \rfloor} + \frac{K^2(N(1-p^{-k})-H)^2}{\lfloor N^\alpha / K \rfloor - K - \lfloor N^\alpha p^{-k} / K \rfloor}\right) \\
& \geq \frac{1}{K^2N^{2-\alpha}} \left( \frac{K^2(Np^{-k}+H)^2}{K+\lfloor N^\alpha / K \rfloor} + \frac{K^2(N(1-p^{-k})-H)^2}{\lfloor N^\alpha / K \rfloor - K - \lfloor N^\alpha p^{-k} / K \rfloor}\right),
\end{align*}
where we used $\mu(D_p(0,K/N^\alpha) \leq K/N^\alpha$. As in \cite{Wei22a} it follows that
$$Z_K \geq K \left( 1 + \frac{H^2}{2N^2}\right) > \frac{K}{2} \left( 1 + \frac{H^2}{2N^2}\right).$$
Thus, 
$$\frac{1}{N^{2-\alpha}} F(K^2,N) + 1 \geq 1 + \frac{H^2}{2N^2} - \frac{3}{2} \frac{1}{K} - \frac{1}{N\mu(D_p(0,K/N^\alpha))}$$
and $\mu(D_p(0,K/N^\alpha) \geq \frac{1}{p} K/N^\alpha$ yields
$$\frac{1}{N^{2-\alpha}} F(K^2,N) \geq \frac{H^2}{2N^2} - \frac{3}{2} \frac{1}{K} - \frac{p}{K} N^{\alpha-1}.$$
For $N$ large enough (depending on $\alpha$ and $p$) we hence obtain
$$H^2 \leq 2N^\alpha F(K^2,N) + \frac{4N^2}{K}.$$
Then $K \geq \tfrac{1}{2} N^{\frac{2}{5}\alpha}$ implies $N^2/K < 2N^{2(1-\tfrac{1}{5}\alpha)}$ and therefore by the definition of $H$ we have
\begin{align*}
    H^2 & \leq 2N^\alpha F(K^2,N) + \frac{4N^2}{K} \\
    & < 6 \max\left(\frac{N^2}{K}, N^\alpha F(K^2,N)\right) \\
    & < 12 \max \left( N^{2(1-\tfrac{1}{5}\alpha)}, N^\alpha F(K^2,N)\right) < H^2,
\end{align*}
which is a contradiction.
\end{proof}
Theorem~\ref{thm:aPPC:disc} is an immediate consequence of Proposition~\ref{prop:disc:aPPC}.
\begin{proof}[Proof of Theorem~\ref{thm:aPPC:disc}] Let $\varepsilon > 0$ be arbitrary. Since $(x_n)_{n \in \mathbb{N}}$ has weak Poissonian pair correlations for $\alpha$ it holds that
$$\left| \frac{N^{-\alpha}}{\mu(D_p(0,s/N^\alpha))} \# \left\{ 1\leq l \neq m \leq N \, : \, |x_l-x_m|_p < \frac{s}{N^\alpha} \right\} - N^{2-\alpha}) \right| \leq \varepsilon N^{2-\alpha}$$
for $N \geq N_0$. We set $F(K,N):=\varepsilon N^{2-\alpha}$ and choose $K$ with $\frac{1}{2}N^{\tfrac{2}{5}\alpha} \leq K \leq N^{\tfrac{2}{5}\alpha}$. From Lemma~\ref{lem:aPPC:disc} it follows that
$$D_N^* \leq \frac{5}{N} \max \left( N^{1-\tfrac{1}{5}\alpha}, N^\alpha N^{2-\alpha} \varepsilon \right) = 5 \sqrt{\varepsilon},$$
which implies the claim.    
\end{proof}

\section{Proof of Theorem~\ref{thm:Kronecker}} \label{sec:kronecker}
Finally, we give two different proofs of the fact that Kronecker sequences possess weak Poissonian pair correlations for all $0 \leq \alpha < 1$. The first is analogous to \cite{Wei22a} and relies on the following proposition.
\begin{proposition} \label{prop:disc:aPPC}
If $(x_n)_{n \in \mathbb{N}} \in \mathbb{Z}_p$ has discrepancy of order $D_N(x_n) = O(N^{-\varepsilon})$ for some $1 \geq \varepsilon > 0$, then $(x_n)_{n \in \mathbb{N}}$ has $\alpha$-Poissonian pair correlations for all $0 < \alpha < \epsilon$.
\end{proposition}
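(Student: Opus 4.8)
The plan is to estimate the pair correlation counting function directly using the discrepancy bound, separating the count into a ``main term'' that produces the expected value and an ``error term'' controlled by $D_N(x_n)$. Fix $\alpha$ with $0 < \alpha < \varepsilon$ and $s > 0$. Since only balls of $p$-adic radius $p^{-k}$ occur, for each $N$ let $k = k(N)$ be the unique integer with $\mu(D_p(0,s/N^\alpha)) = p^{-k}$; note that $p^{-k} \asymp s/N^\alpha$, so $p^k \asymp N^\alpha/s$, and in particular $k(N) \sim \alpha \log_p N$ grows logarithmically. The key combinatorial observation, exactly as in the heuristic in the introduction, is that the condition $|x_i - x_j|_p \le s/N^\alpha$ is equivalent to $x_i$ and $x_j$ lying in the same residue ball modulo $p^k$. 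Hence, writing $C_a := \#\{1 \le n \le N : x_n \in D_p(a, p^{-k})\}$ for the $p^k$ disjoint balls $a$ of radius $p^{-k}$ covering $\mathbb{Z}_p$, we get
\begin{align*}
\#\left\{1 \le i \ne j \le N : |x_i - x_j|_p \le \tfrac{s}{N^\alpha}\right\} = \sum_{a} C_a(C_a - 1) = \sum_a C_a^2 - N.
\end{align*}

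Next I would insert the discrepancy estimate. By definition of $D_N$, each $C_a = N p^{-k} + \theta_a$ with $|\theta_a| \le D_N(x_n)\, N = O(N^{1-\varepsilon})$, and $\sum_a \theta_a = 0$ since $\sum_a C_a = N$. Expanding,
\begin{align*}
\sum_a C_a^2 = \sum_a \left(N^2 p^{-2k} + 2 N p^{-k}\theta_a + \theta_a^2\right) = N^2 p^{-k} + \sum_a \theta_a^2,
\end{align*}
using $\sum_a p^{-2k} = p^k \cdot p^{-2k} = p^{-k}$ and $\sum_a \theta_a = 0$. The error $\sum_a \theta_a^2 \le p^k \cdot \max_a \theta_a^2 = O(p^k N^{2-2\varepsilon}) = O(N^{\alpha} N^{2-2\varepsilon})$ (absorbing the $s$-dependent constant). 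Therefore
\begin{align*}
F_{N,\alpha,p}(s) = \frac{1}{N^2 p^{-k}}\left(N^2 p^{-k} + \sum_a \theta_a^2 - N\right) = 1 + O\!\left(\frac{N^{\alpha + 2 - 2\varepsilon}}{N^2 p^{-k}}\right) + O\!\left(\frac{N}{N^2 p^{-k}}\right).
\end{align*}
Since $p^{-k} \asymp N^{-\alpha}$, the two error terms are $O(N^{2\alpha - 2\varepsilon})$ and $O(N^{\alpha - 1})$ respectively; both tend to $0$ because $\alpha < \varepsilon \le 1$ forces $2\alpha - 2\varepsilon < 0$ and $\alpha - 1 < 0$. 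Hence $F_{N,\alpha,p}(s) \to 1$ for every $s > 0$, and the case $s = 0$ is trivial, giving $\alpha$-Poissonian pair correlations.

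The main obstacle, and the place where care is needed, is the bound on $\sum_a \theta_a^2$: the naive estimate $\sum_a \theta_a^2 \le (\max_a |\theta_a|) \sum_a |\theta_a|$ is not immediately good enough unless one also controls $\sum_a |\theta_a|$, so I would use instead the uniform bound $\sum_a \theta_a^2 \le p^k (\max_a |\theta_a|)^2$ together with the fact that the number of nonempty balls is at most $\min(p^k, N)$ — in the relevant range $p^k \asymp N^\alpha \le N$, so $p^k$ is the binding constraint. One must then verify that $p^k N^{2-2\varepsilon} = o(N^{2-\alpha})$, i.e.\ $\alpha + 2 - 2\varepsilon < 2 - \alpha$, i.e.\ $\alpha < \varepsilon$, which is precisely the hypothesis; this confirms that the exponent bookkeeping closes exactly at the stated threshold. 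A secondary technical point is that $k(N)$ is a step function of $N$ rather than exactly $\alpha \log_p N$, but since we only need $p^{-k} \asymp s/N^\alpha$ with constants depending on $p$ and $s$, this causes no difficulty.
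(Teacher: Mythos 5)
Your proposal is correct, and at its core it is the same argument as the paper's: both proofs reduce the pair count to counting points in the $p$-adic balls of radius $p^{-k}=\mu(D_p(0,s/N^\alpha))$ and then feed in the discrepancy bound, using $\tfrac{1}{p}\tfrac{s}{N^\alpha} < p^{-k} \le \tfrac{s}{N^\alpha}$ to convert the error into a power of $N$. The difference is in the bookkeeping of the error term. The paper applies the discrepancy once per index $i$ to the ball $D_p(x_i,p^{-k})$, obtaining $N\cdot\bigl(N\mu + N\cdot\mathcal{O}(N^{-\varepsilon})\bigr)$ and hence a relative error of order $N^{\alpha-\varepsilon}$; since $\sum_i \#\{j: x_j\in D_p(x_i,p^{-k})\}=\sum_a C_a^2$ in your notation, this is literally the same counting identity you use, just bounded via the first moment $\sum_a |\theta_a|\cdot\max_a|\theta_a|$-style estimate. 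You instead exploit the partition into the $p^k$ residue balls, use $\sum_a\theta_a=0$ to kill the cross term, and bound the pure second moment $\sum_a\theta_a^2\le p^k(ND_N)^2$, which yields the sharper relative error $N^{2(\alpha-\varepsilon)}$ (plus the diagonal correction $N^{\alpha-1}$, which the paper silently absorbs). Both versions close under the hypothesis $\alpha<\varepsilon$, so your refinement buys a better rate but not a stronger theorem; it would, however, matter if one wanted the conclusion up to $\alpha<2\varepsilon-$something or an explicit convergence speed. No gaps.
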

\begin{proof} Since $D_N(x_n) = \mathcal{O}(N^{-\varepsilon})$, we obtain that
\begin{align*} \frac{1}{N^2} & \frac{1}{\mu\left( B(0,s/N^\alpha)\right)} \# \left\{ 1 \leq i \neq j \leq N \, | \, \left|x_i-x_j\right|_p \leq \frac{s}{N^\alpha} \right\} \\
& = \frac{1}{N^2} \frac{1}{\mu\left( B(0,s/N^\alpha)\right)} N \cdot \left( N \cdot \mu\left( B(0,s/N^\alpha)\right) + N \cdot \mathcal{O}(N^{-\varepsilon}) \right).\\
& = 1 + \frac{\mathcal{O}(N^{-\varepsilon})}{B(0,s/N^\alpha)}.\end{align*}
As $\frac{1}{p} \frac{s}{N^\alpha} < p^{-k_0} \leq \frac{s}{N^\alpha}$, the claim follows.
\end{proof}
From Proposition~\ref{prop:disc:aPPC} and Theorem~\ref{thm:disc:kron:quant} we can conclude that for $a \in \mathbb{Z}_p^\times, b \in \mathbb{Z}_p$, the sequence $(na+b)_{n \in \mathbb{N}}$ has $\alpha$-Poissonian pair correlations for all $0 < \alpha <1$ as in the real case. To prove this part of Theorem~\ref{thm:Kronecker}, we can alternatively use the following Theorem from \cite{Bee72}.
\begin{theorem} \label{thm:beer} Let $(x_n)_{n \in \mathbb{N}} \subset \mathbb{Z}_p$ have discrepancy $D_N(x_n)$. Furthermore, let $(y_n)_{n = 1}^{N^2}$ consist of the elements $x_i - x_j$ for $i,j = 1, \ldots, N$ in any ordering and let $E_N^2(y_n)$ denote its discrepancy. Then
$$D_N^2 \leq E_{N^2} \leq D_N.$$
\end{theorem}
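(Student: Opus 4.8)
The plan is to analyze both discrepancies level by level, exploiting that in $\mathbb{Z}_p$ the balls $D_p(\alpha,1/p^k)$ of radius $p^{-k}$ are exactly the cosets of $p^k\mathbb{Z}_p$, i.e. the fibres of the reduction homomorphism $\pi_k:\mathbb{Z}_p\to\mathbb{Z}/p^k\mathbb{Z}$. I would fix $k$, write $q=p^k$, and for each residue $a\in\mathbb{Z}/q\mathbb{Z}$ set $c_a=\#\{1\le n\le N:\pi_k(x_n)=a\}$ and $f_a=c_a/N$, so that $(f_a)_a$ is a probability vector. Ranging $\alpha$ over $\mathbb{Z}_p$ at this fixed $k$ realises every residue class, so the level-$k$ contribution to the discrepancy is $D_N^{(k)}:=\max_a|f_a-1/q|$, and by definition $D_N(x_n)=\sup_k D_N^{(k)}$. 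The same bookkeeping applies to the difference sequence.

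Because $\pi_k$ is a ring homomorphism, $\pi_k(x_i-x_j)=\pi_k(x_i)-\pi_k(x_j)$, so the number of pairs landing in a difference ball is an autocorrelation of the $c_a$: writing $C_d=\#\{(i,j):\pi_k(x_i-x_j)=d\}$ gives $C_d=\sum_b c_{b+d}c_b$ with indices mod $q$, hence $g_d:=C_d/N^2=\sum_b f_{b+d}f_b$ is again a probability vector, and $E_{N^2}^{(k)}:=\max_d|g_d-1/q|$ with $E_{N^2}(y_n)=\sup_k E_{N^2}^{(k)}$. The fact that differences of balls are again balls is precisely the ultrametric feature that makes the $p$-adic statement cleaner than its real counterpart. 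The computational heart is the identity, which follows from $\sum_b f_{b+d}=1$,
\[ g_d-\frac{1}{q}=\sum_b f_{b+d}\Bigl(f_b-\frac{1}{q}\Bigr). \]

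For the upper bound I would read this identity as a weighted average: since $(f_{b+d})_b$ is a probability vector, $|g_d-1/q|\le\sum_b f_{b+d}|f_b-1/q|\le\max_b|f_b-1/q|=D_N^{(k)}$ for every $d$, so $E_{N^2}^{(k)}\le D_N^{(k)}$ and, taking the supremum over $k$, $E_{N^2}\le D_N$. For the lower bound I would specialise to $d=0$, where the identity collapses to
\[ g_0-\frac{1}{q}=\sum_b f_b\Bigl(f_b-\frac{1}{q}\Bigr)=\sum_b\Bigl(f_b-\frac{1}{q}\Bigr)^2\ge\max_b\Bigl(f_b-\frac{1}{q}\Bigr)^2=\bigl(D_N^{(k)}\bigr)^2, \]
where the middle equality uses $\sum_b f_b=1$. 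As this quantity is non-negative we get $E_{N^2}^{(k)}\ge|g_0-1/q|\ge(D_N^{(k)})^2$, hence $E_{N^2}\ge(D_N^{(k)})^2$ for every $k$.

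The remaining step is to pass from these level-wise estimates to the stated inequalities, and this is the only place I expect care to be needed: one checks that squaring commutes with the supremum, $D_N^2=(\sup_k D_N^{(k)})^2=\sup_k(D_N^{(k)})^2$, which holds because each $D_N^{(k)}\ge0$. Taking $\sup_k$ in $E_{N^2}\ge(D_N^{(k)})^2$ then yields $E_{N^2}\ge D_N^2$, completing the chain $D_N^2\le E_{N^2}\le D_N$. The \emph{main obstacle} is therefore conceptual rather than computational: recognising the difference counts as an autocorrelation and isolating the $\ell^2$-versus-$\ell^\infty$ mechanism (the $d=0$ term) responsible for the squared lower bound, after which every estimate is elementary.
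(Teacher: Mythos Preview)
Your argument is correct. The reduction to residue classes modulo $p^k$ is exactly the right move: the ultrametric makes the level-$k$ balls coincide with cosets of $p^k\mathbb{Z}_p$, so the difference counts become the cyclic autocorrelation $g_d=\sum_b f_{b+d}f_b$, and your identity $g_d-1/q=\sum_b f_{b+d}(f_b-1/q)$ immediately gives the upper bound as a convex-combination estimate and the lower bound via the $d=0$ term $g_0-1/q=\sum_b(f_b-1/q)^2\ge(D_N^{(k)})^2$. The passage through the supremum over $k$ is handled correctly.

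There is, however, nothing in the paper to compare your proof against: the paper does not prove this theorem but quotes it verbatim from Beer~\cite{Bee72} (introduced as ``the following Theorem from \cite{Bee72}'') and only \emph{applies} it to deduce weak Poissonian pair correlations of Kronecker sequences. So your write-up supplies a self-contained proof where the paper relies on an external reference; in that sense your approach is necessarily ``different'', and what it buys is independence from the cited source together with a transparent explanation of why the lower bound carries a square (the $\ell^2$ versus $\ell^\infty$ comparison you isolate at $d=0$).
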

Now let $s \geq 0$ be arbitrary. Without loss of generality we may assume that the first $2N$ elements of $y_n$ are the differences $x_i - x_i = 0$. From Theorem~\ref{thm:beer} we can then deduce 
\begin{align*}
    & \left| \frac{\# D_p(0,s/N^\alpha) \cap \left\{y_{2N+1},\ldots,y_{N^2} \right\} }{N^2} - \mu(D_p(0,s/N^\alpha)) \right|\\
    & \leq \left| \frac{\# D_p(0,s/N^\alpha) \cap \left\{y_{1},\ldots,y_{N^2} \right\}}{N^2} - \mu(D_p(0,s/N^\alpha)) \right| + \frac{2}{N}\\
    & \leq E_{N^2} + \frac{2}{N} \leq D_N + \frac{2}{N}.
\end{align*}
If $0 < \alpha < 1$, then $F_{N,\alpha,p}(s) \to 1$ as desired. However, $(na+b)_{n \in \mathbb{N}}$ can never have Poissonian pair correlations, i.e. for $\alpha = 1$. This is in parallel to the real case, where it is easy to show from the continued fraction expansion of $z \in \mathbb{R} \setminus \mathbb{Q}$ that the Kronecker sequence $(\left\{nz\right\})_{n \in \mathbb{N}} \subset [0,1]$, where $\{ \cdot \}$ denotes the fractional part, does not have Poissonian pair correlations.\\[12pt]
As $|n a + b - (m a + b)|_p =  |n - m|_p$, it suffices to consider the sequence $(n)_{n \in \mathbb{N}}$. Let $s<1$, then $p^{-k_0} \leq s/N$ implies $p^{k_0} > N$. Thus $|i-j|_p > s/N$ for all $i,j \leq N$. This proves the final claim of Theorem~\ref{thm:Kronecker}.


\bibliographystyle{alpha}
\bibdata{literatur}
\bibliography{literatur}

\textsc{Ruhr West University of Applied Sciences, Duisburger Str. 100, D-45479 M\"ulheim an der Ruhr,} \texttt{christian.weiss@hs-ruhrwest.de}

\end{document}